\renewcommand{\theequation}                            
       {\mbox{\arabic{section}.\arabic{equation}}}
\newcommand{\origsetminus}{} \let\origsetminus=\setminus           
\renewcommand{\setminus}{\!\origsetminus\!}
\theoremstyle{plain}
\newtheorem{definition}{Definition}[section]
\newtheorem{lemma}[definition]{Lemma}
\newtheorem{theorem}[definition]{Satz}
\newtheorem{example}[definition]{Example}
\newtheorem{remark}[definition]{Remark}
\renewcommand{\mathbb}{\mathbbm}                     
\renewcommand{\epsilon}{\varepsilon}                 
\renewcommand{\phi}{\varphi}
\renewcommand{\le}{\leqslant}
\renewcommand{\ge}{\geqslant}
\newcommand{\origfoo}{} \let\origfoo=\sqrt           
\renewcommand{\sqrt}[1]{\origfoo{#1}\;}
\renewcommand{\O}{{\mathcal O}}                      
\newcommand{\abs}[1]{\left\lvert #1 \right\rvert}    
\newcommand{\norm}[1]{\left\lVert #1 \right\rVert}   
\renewcommand{\mathcal}{\mathscr}          
\DeclareMathOperator{\R}{{\mathbb R}}                
\DeclareMathOperator{\Rp}{{\mathbb R}_+}             
\DeclareMathOperator{\N}{{\mathbb N}}                
\DeclareMathOperator{\Id}{ Id}                        
\newcommand{\A}{{\mathcal A}}
\DeclareMathOperator{\Borel}{{\mathfrak B}}
\newcommand{\scapro}[2]{\langle #1,#2\rangle}       
\DeclareMathOperator{\1}{\mathbbm 1}
\newcommand{\Z}{{\mathcal Z}}
\title{Stable cylindrical L{\'e}vy processes \\and
 the stochastic Cauchy problem}
\author{ Markus Riedle \\
Department of Mathematics\\
King's College\\
London WC2R 2LS\\
United Kingdom }
\begin{document}

\maketitle

\abstract{In this work, we consider the stochastic Cauchy problem driven by the canonical $\alpha$-stable cylindrical L{\'e}vy process.
This noise naturally generalises the cylindrical Brownian motion or  space-time Gaussian white noise. We derive a sufficient and necessary condition for the existence of the weak and mild solution of the stochastic Cauchy problem and establish the temporal irregularity of the solution.}\\

\noindent
{\rm \bf AMS 2010 subject classification:}  60H15; 60G52; 60G20; 47D06\\
{\rm \bf Key words and phrases:} cylindrical L\`evy processes; stochastic partial differential 
 equations; stable distributions

\section{Introduction}

One of the most fundamental stochastic partial differential equations is  a linear evolution equation perturbed by an additive noise
of the form 
\begin{align} \label{eq.Cauchy-intro}
dX(t)=AX(t)\,dt + dL(t)\quad\text{ for }t\in [0,T], 
\end{align}
where $A$ is the generator of a strongly continuous semigroup $(T(t))_{t\ge 0}$ on a Hilbert space $U$. If $L$ is the standard cylindrical Brownian motion in $U$ then there exists a weak, or equivalently mild, solution in $U$
of \eqref{eq.Cauchy-intro} if and only if
\begin{align}\label{eq.cond-intro}
\int_0^T \norm{T(s)}_{\rm{HS}}^2 \,ds<\infty,
\end{align}
where $\norm{\cdot}_{\rm{HS}}$ denotes the Hilbert-Schmidt norm; see
\cite[Th.7.1]{NeervenWeis}.
For the example of the stochastic heat equation, 
in which $A$ is chosen as the Laplace operator $\Delta$, this result implies that there exists a mild solution if and only if the spatial dimension equals one. 
A natural next generalisation step is to replace the cylindrical Brownian motion by an $\alpha$-stable noise. Like the cylindrical Brownian motion this noise does not exist as a genuine stochastic process in an infinite-dimensional space.  

In the present article, we consider equation \eqref{eq.Cauchy-intro}
driven by an $\alpha$-stable cylindrical noise and a generator $A$ allowing a spectral decomposition. For this purpose, we introduce the 
canonical $\alpha$-stable cylindrical L{\'e}vy process as a natural generalisation of the cylindrical Brownian motion. By evoking 
the recently introduced approach to stochastic integration for deterministic integrands with respect to cylindrical L{\'e}vy processes
in \cite{Riedle15}, we derive that there exists a mild (or weak) solution in $U$ if and only if
\begin{align}\label{eq.cond-intro-alpha}
\int_0^T \norm{T(s)}_{\rm{HS}}^\alpha \,ds<\infty.
\end{align}
Obviously, this result ``smoothly'' extends the equivalent condition 
\eqref{eq.cond-intro} in the Gaussian setting. We demonstrate that in the case of the stochastic heat equation this result leads to the sufficient and necessary condition
\begin{align}\label{eq.condition-heat-intro} 
 \alpha d <4
\end{align}
for the existence of a mild solution, where $d$ denotes the spatial dimension. As already observed in other examples of cylindrical L{\'e}vy processes as driving noise, we finish this note by establishing that the solution has highly irregular paths.

Equation \eqref{eq.Cauchy-intro} in Banach spaces with an $\alpha$-stable noise, or even slightly more general with a subordinated cylindrical Brownian motion,  has already been considered by Brze\'zniak and Zabczyk in \cite{BrzZab10}. However, their approach is based on embedding the underlying Hilbert space $U$ in a larger space such that the cylindrical noise becomes a genuine L{\'e}vy process. This leads to the fact that their condition for the existence of a solution not only 
lacks the necessity but also is in terms of the larger Hilbert space, which per se is not related to equation \eqref{eq.Cauchy-intro}.  Moreover, they only show that the paths of the solution are irregular in the sense that there does not exist a modification of the solution 
with c\`adl\`ag paths in $U$.

Another approach to generalise the model of the driving noise is based on the Gaussian space-time white noise leading to a {\em L{\'e}vy space-time white noise}; see Albeverio et al.\ \cite{Albeverio-etal} or Applebaum and Wu \cite{ApplebaumWu}. In this framework, equation 
\eqref{eq.Cauchy-intro} (and even with a multiplicative noise) driven by 
an $\alpha$-stable L{\'e}vy white noise is considered in the work \cite{Balan} by Balan.
We show that the $\alpha$-stable L{\'e}vy white noise in \cite{Balan} corresponds to a canonical $\alpha$-stable cylindrical L{\'e}vy process in the same way as it is known in the Gaussian setting (see \cite[Th.3.2.4]{KallianpurXiong}). However,  and very much in contrast to the Gaussian setting, it turns out that the corresponding cylindrical L{\'e}vy process is defined on a Banach space different from the underlying Hilbert space $U$. This leads to the new phenomena that the necessary and sufficient condition for the existence of a solution of the heat equation in the space-time white noise approach differs from our condition \eqref{eq.condition-heat-intro} 
in the cylindrical approach.

\section{The canonical  $\alpha$-stable cylindrical L{\'e}vy process}

Let $U$ be a separable Banach space with dual $U^\ast$. The dual pairing is denoted by
$\scapro{u}{u^\ast}$ for $u\in U$ and $u^\ast\in U^\ast$. For any $u_1^\ast,\dots, u_n^\ast\in U^\ast$ we define the projection
\begin{align*}
\pi_{u_1^\ast,\dots, u_n^\ast}\colon U\to \R^n,
\qquad \pi_{u_1^\ast,\dots, u_n^\ast}(u)=\big(\scapro{u}{u_1^\ast},
\dots, \scapro{u}{u_n^\ast}\big).\end{align*}
The Borel $\sigma$-algebra in $U$ is denoted by  $\Borel(U)$. For a subset
$\Gamma$ of $U^\ast$, sets of the form
 \begin{align*}
C(u_1^\ast,\dots ,u_n^\ast;B)&:= 
  \pi^{-1}_{u_1^\ast,\dots, u_n^\ast}(B),
\end{align*}
with $u_1^\ast,\dots, u_n^\ast\in \Gamma$ and $B\in \Borel(\R^n)$ are
called {\em cylindrical sets with respect to $\Gamma$}. The set of all these cylindrical sets is
denoted by $\Z(U,\Gamma)$; it is a $\sigma$-algebra if $\Gamma$ is finite and  it is an algebra otherwise. If $\Gamma=U^\ast$ we write $\Z(U):=\Z(U,U^\ast)$. 
A function $\mu:\Z(U)\to [0,\infty]$ is called a {\em cylindrical measure on
$\Z(U)$}, if for each finite subset $\Gamma\subseteq U^\ast$ the restriction of
$\mu$ to the $\sigma$-algebra $\Z(U,\Gamma)$ is a measure. A cylindrical
measure is called finite if $\mu(U)<\infty$ and a cylindrical probability
measure if $\mu(U)=1$. The characteristic
function $\phi_\mu:U^\ast\to {\mathbb C}$ of a finite cylindrical measure $\mu$ is defined by
\begin{align*}
 \phi_{\mu}(u^\ast):=\int_U e^{i\scapro{u}{u^\ast}}\,\mu(du)\qquad\text{for all }u^\ast\in  U^\ast.
\end{align*}

Let $(\Omega,\A,P)$ be a probability space. The space of equivalence classes of measurable functions $f\colon\Omega\to U$ is denoted by $L_P^0(\Omega;U)$ and it is equipped with the  topology of convergence in probability.
A {\em cylindrical random variable in $U$} is a linear and continuous mapping
\begin{align*}
 Z\colon U^\ast\to L_P^0(\Omega;\R).
\end{align*}
The characteristic function of a cylindrical random variable $Z$ is defined by
\begin{align*}
\phi_Z\colon U^\ast\to {\mathbb C}, \qquad \phi_Z(u^\ast)=E\left[ e^{i Z u^\ast}\right].
\end{align*}
If $C=C(u_1^\ast,\dots, u_n^\ast;B)$ is a cylindrical set for
$u^\ast_1,\dots, u^\ast_n\in U^\ast$ and $B\in \Borel(\R^n)$ we obtain a cylindrical probability measure $\mu$ by the prescription
\begin{align*}
  \mu(C):=P\big((Zu^\ast_1,\dots, Zu^\ast_n)\in B\big).
\end{align*}
We call $\mu$ the {\em cylindrical distribution of $Z$} and the
characteristic functions $\phi_\mu$ and $\phi_Z$ of $\mu$ and $Z$
coincide. 

A family $(Z(t):\,t\ge 0)$ of cylindrical random variables $Z(t)$ in $U$ is called
a {\em cylindrical process in $U$}. In our work \cite{DaveMarkus} with Applebaum, we extended the concept of
cylindrical Brownian motion to cylindrical L{\'e}vy processes:
\begin{definition}
A cylindrical process $(L(t):\, t\ge 0)$ in $U$ is called a {\em cylindrical L{\'e}vy process} if for each $n\in\N$ and any $u_1^\ast,\dots, u_n^\ast\in U$ we have that
\begin{align*}
\big( (L(t)u_1^\ast,\dots, L(t)u_n^\ast):\, t\ge 0\big)
\end{align*}
is a L{\'e}vy process in $\R^n$.
\end{definition}
The characteristic function of $L(t)$ for each $t\ge 0$ is of the form
\begin{align*}
\phi_{L(t)}\colon U^\ast \to {\mathbb C},\qquad  \phi_{L(t)}(u^\ast)&=\exp\big(t \Psi(u^\ast)\big),
\end{align*}
where $\Psi\colon U^\ast \to {\mathbb C}$ is called the {\em cylindrical symbol of $L$} and  is of the form
\begin{align*}
\Psi(u^\ast)=i a(u^\ast) -\tfrac{1}{2} \scapro{Qu^\ast}{u^\ast}
   +\int_U\left(e^{i\scapro{u}{u^\ast}}-1- i\scapro{u}{u^\ast}   \1_{B_{\R}}(\scapro{u}{u^\ast})\right)\nu(du).
\end{align*}
Here, $a\colon U^\ast\to\R$ is a continuous mapping with $a(0)=0$,  $Q\colon U^\ast \to U$ is a positive and symmetric operator and $\nu$ is a cylindrical measure on $\Z(U)$ satisfying
\begin{align*}
  \int_U \big(\scapro{u}{u^\ast}^2 \wedge 1\Big) \,\nu(du)<\infty
  \qquad\text{for all }u^\ast\in U^\ast.
\end{align*}
The characteristic function of $L$ is studied in detail in our work \cite{Riedle11}.

In this article we consider a specific example of a cylindrical L{\'e}vy process, 
which is obtained by the usual generalisation of the characteristic function of the standard normal distribution:
\begin{definition}
A cylindrical L{\'e}vy process $(L(t):\, t\ge 0)$ is called {\em canonical $\alpha$-stable} for  $\alpha\in (0,2)$ if its characteristic function is of the form
\begin{align*}
\phi_{L(t)}\colon U^\ast \to {\mathbb C}, \qquad
\phi_{L(t)}(u^\ast)=\exp\Big(-t\norm{u^\ast}^\alpha\Big).
\end{align*}
\end{definition}
Let $\mu$ be the cylindrical probability measure on $\Z(U)$ defined by 
the characteristic function 
\begin{align*}
\phi_{\mu}\colon U^\ast \to {\mathbb C}, \qquad
\phi_{\mu}(u^\ast)=\exp\Big(-\norm{u^\ast}^\alpha\Big),
\end{align*}
for some $\alpha\in (0,2)$. The cylindrical probability measure $\mu$ is called
the {\em canonical $\alpha$-stable cylindrical measure}. Bochner's theorem for cylindrical measures (\cite[Prop.IV.4.2]{Vaketal}) guarantees that $\mu$ exists. 
For, the function $\phi_\mu$ satisfies $\phi_\mu(0)=1$ and is continuous
and positive-definite (\cite[p.194]{Vaketal}). 
Two possible constructions of the canonical $\alpha$-stable L{\'e}vy process
such that its cylindrical distribution is given by $\mu$ are presented in Section \ref{se.representations}.

The cylindrical probability measure $\mu$ is  symmetric, i.e.\ it satisfies $\mu(C)=\mu(-C)$ for all $C\in \Z(U)$ and it is rotationally invariant, 
i.e.\ $\mu\circ M^{-1}=\mu$ for each linear unitary operator $M\colon U\to U$.
\begin{remark}
In \cite{Linde} among other publications, a symmetric cylindrical measure $\rho$ on $\Z(U)$ is called $\alpha$-stable  if there exists a measure space $(M,{\mathcal M},\sigma)$ 
and a linear, continuous operator $T\colon U^\ast \to L^\alpha_\sigma(M,{\mathcal M})$ such that  
\begin{align}\label{eq.def-Linde}
 \phi_\rho(u^\ast)=\exp\left(-\norm{Tu^\ast}_{L^\alpha_\sigma (M,{\mathcal M})}\right)
 \qquad\text{for all }u^\ast\in U^\ast.
\end{align}
For  the standard $\alpha$-stable cylindrical measure $\mu$, each 
image measure $\mu\circ \pi_{u^\ast}^{-1}$ is a stable measure 
on $\Borel(\R)$ for each $u^\ast\in U^\ast$. Thus, Theorem 6.8.5 in \cite{Linde} guarantees that $\mu$ 
also satisfies \eqref{eq.def-Linde}.
\end{remark}

In the case of a separable Hilbert space
we relate the characteristic function of the canonical $\alpha$-stable L{\'e}vy 
process with the well-known spectral representation on the 
sphere $S(\R^n):=\{\beta\in\R^n:\, \abs{\beta}=1\}$ of symmetric, rotationally invariant measures in $\R^n$:
\begin{lemma}\label{le.projected-spectral}
Let $U$ be a Hilbert space with orthonormal basis $(e_n)_{n\in\N}$ and
$L$ be the canonical $\alpha$-stable cylindrical L{\'e}vy process in $U$. Then
the characteristics of $L$ is given by $(0,0,\nu)$ with the 
cylindrical L{\'e}vy measure $\nu$ satisfying for all $n\in\N$: 
\begin{align*}
 \nu\circ\pi_{e_1,\dots, e_n}^{-1}(B)
 = \tfrac{\alpha}{c_\alpha} \int_{S(\R^n)}\lambda_n(d\xi)\int_0^\infty \1_{B}(r\xi)\frac{1}{r^{1+\alpha}}\, dr
 \qquad \text{ for }B\in\Borel(\R^n),
\end{align*}
 where $\lambda_n$ is  uniformly distributed on the sphere $S(\R^n)$ with
\begin{align*}
 \lambda_n\big(S(\R^n)\big)
=\frac{\Gamma(\tfrac{1}{2})\Gamma(\tfrac{n+\alpha}{2})}{\Gamma(\tfrac{n}{2})\Gamma(\tfrac{1+\alpha}{2})}
\end{align*}
and the constant $c_\alpha$ is defined in Theorem \ref{th.stable-R}.
\end{lemma}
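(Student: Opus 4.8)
The plan is to identify the law of the projected $\R^n$-valued L\'evy process and then to read off its L\'evy measure from the uniqueness of the L\'evy--Khintchine triplet. First I would compute the characteristic function of the projection $(L(t)e_1,\dots,L(t)e_n)$. Identifying $U^\ast$ with $U$ through the Riesz isomorphism and using the linearity of the cylindrical random variable $L(t)$, for $\beta=(\beta_1,\dots,\beta_n)\in\R^n$ one has $\sum_{k=1}^n\beta_k L(t)e_k=L(t)u^\ast$ with $u^\ast=\sum_{k=1}^n\beta_k e_k$. Orthonormality of $(e_n)$ gives $\norm{u^\ast}=\abs{\beta}$, whence the characteristic function of the projection equals $\phi_{L(t)}(u^\ast)=\exp(-t\abs{\beta}^\alpha)$. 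This is the exponent of a symmetric, rotationally invariant $\alpha$-stable law on $\R^n$; since $-\abs{\beta}^\alpha$ is real and homogeneous of degree $\alpha<2$, it has neither a quadratic (Gaussian) nor an imaginary (drift) component, so the projected characteristics are $(0,0,\nu_n)$ with $\nu_n:=\nu\circ\pi_{e_1,\dots,e_n}^{-1}$.

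Because the characteristic triplet is uniquely determined by the characteristic function, it remains to exhibit a L\'evy measure whose L\'evy--Khintchine exponent equals $-\abs{\beta}^\alpha$ and to check it has the stated polar form. Guided by the scaling built into $\abs{\beta}^\alpha$ and by the rotational invariance of $\mu$, I would make the ansatz that $\nu_n$ has radial part $r^{-(1+\alpha)}\,dr$ and angular part a constant multiple of the surface measure on $S(\R^n)$, i.e.\ $\nu_n=\tfrac{\alpha}{c_\alpha}\,\lambda_n\otimes r^{-(1+\alpha)}\,dr$ in polar coordinates, with the total mass of $\lambda_n$ still to be determined. Since both $\nu_n$ and the truncation are symmetric, the compensating linear term integrates to zero and the exponent collapses to $\int_{\R^n}(\cos\scapro{x}{\beta}-1)\,\nu_n(dx)$, which moreover converges absolutely because $1-\cos$ is of order $\abs{x}^2$ near the origin and bounded at infinity.

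Passing to polar coordinates separates this integral into a radial and an angular factor. The radial integral is exactly the one-dimensional stable computation underlying Theorem \ref{th.stable-R}: $\int_0^\infty(1-\cos(rs))\,r^{-(1+\alpha)}\,dr=\tfrac{c_\alpha}{\alpha}\abs{s}^\alpha$, which cancels the prefactor $\tfrac{\alpha}{c_\alpha}$ and leaves the exponent equal to $-\int_{S(\R^n)}\abs{\scapro{\xi}{\beta}}^\alpha\,\lambda_n(d\xi)$. By rotational invariance this angular integral equals $\abs{\beta}^\alpha\int_{S(\R^n)}\abs{\xi_1}^\alpha\,\lambda_n(d\xi)$, so the exponent reproduces $-\abs{\beta}^\alpha$ precisely when $\int_{S(\R^n)}\abs{\xi_1}^\alpha\,\lambda_n(d\xi)=1$. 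Slicing the sphere at height $\xi_1$ turns $\int_{S(\R^n)}\abs{\xi_1}^\alpha\,d\sigma$ into the Beta integral $\int_0^1 u^{(\alpha-1)/2}(1-u)^{(n-3)/2}\,du=\tfrac{\Gamma(\tfrac{\alpha+1}{2})\Gamma(\tfrac{n-1}{2})}{\Gamma(\tfrac{n+\alpha}{2})}$, and combining this with the surface area $2\pi^{n/2}/\Gamma(\tfrac n2)$ of $S(\R^n)$ forces exactly the normalisation $\lambda_n(S(\R^n))=\Gamma(\tfrac12)\Gamma(\tfrac{n+\alpha}{2})/(\Gamma(\tfrac n2)\Gamma(\tfrac{1+\alpha}{2}))$ claimed in the statement.

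I expect the angular step to be the main obstacle: one has to combine the Beta-function evaluation of $\int_{S(\R^n)}\abs{\xi_1}^\alpha\,d\sigma$, the surface-area formula, and the radial constant $c_\alpha$ without sign or normalisation slips, as it is exactly this bookkeeping that produces the Gamma-function ratio defining $\lambda_n(S(\R^n))$. A subordinate point to justify carefully is the use of Fubini to split the radial and angular integrations, which is licensed by the absolute convergence noted above and by the positivity of $\nu_n$.
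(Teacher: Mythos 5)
Your proposal is correct, and the characteristic-function computation at the start (orthonormality giving $\norm{u^\ast}=\abs{\beta}$, hence $\phi=\exp(-t\abs{\beta}^\alpha)$, hence a symmetric rotationally invariant projection with purely non-Gaussian characteristics) is exactly the paper's opening move. Where you diverge is in how the polar form of $\nu\circ\pi_{e_1,\dots,e_n}^{-1}$ is obtained: the paper simply invokes the equivalence (a)$\Leftrightarrow$(b)$\Leftrightarrow$(d) of Theorem \ref{th.stable-R} to read off the product form $\tfrac{\alpha}{c_\alpha}\,\lambda_n(d\xi)\,r^{-(1+\alpha)}dr$ with $\lambda_n$ uniform, and then uses part (d) to get the normalisation equation $1=\lambda_n(S(\R^n))\,E[\abs{Y_{n,1}}^\alpha]$, which it evaluates via Lemma \ref{le.uniform-distribution}. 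You instead posit the polar ansatz, verify by direct computation that its L\'evy--Khintchine exponent is $-\abs{\beta}^\alpha$ (radial integral $\int_0^\infty(1-\cos(rs))r^{-(1+\alpha)}dr=\tfrac{c_\alpha}{\alpha}\abs{s}^\alpha$ plus the spherical Beta-function moment), and conclude by uniqueness of the triplet. The two routes meet at the same normalisation identity --- your Beta integral is precisely the content of Lemma \ref{le.uniform-distribution}, and your radial constant agrees with the paper's $c_\alpha$ --- so your argument is in effect a self-contained re-derivation of the portion of Theorem \ref{th.stable-R} that the paper outsources to Sato. What you gain is transparency about where $c_\alpha$ and the Gamma-ratio come from; what you lose is brevity, and you take on the obligation (which you correctly flag) to justify the Fubini splitting and the slicing constants, including the fact that the slice measure involves $\abs{S(\R^{n-1})}$ rather than $\abs{S(\R^n)}$ --- a detail your sketch glosses over but which washes out correctly in the final ratio.
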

\begin{proof}
For each $n\in\N$ and $\beta=(\beta_1,\dots, \beta_n)\in\R^n$ we obtain
\begin{align}\label{eq.char-project-e}
\phi_{L(1)e_1,\dots, L(1)e_n}(\beta)
&=\phi_{L(1)}\big(\beta_1e_1+\dots + \beta_ne_n\big)\notag\\
&= \exp\left(- \left( \norm{\beta_1e_1+\dots + \beta_ne_n}^2\right)^{\alpha/2}\right)
= \exp\left(-  \abs{\beta}^\alpha\right).
\end{align}
It follows that the distribution of the random vector $(L(1)e_1,\dots, L(1)e_n)$ is symmetric and rotationally invariant. As the L{\'e}vy measure of $(L(1)e_1,\dots, L(1)e_n)$ is given by $ \nu\circ\pi_{e_1,\dots, e_n}^{-1}$,
Theorem \ref{th.stable-R} implies 
\begin{align*}
 \nu\circ\pi_{e_1,\dots, e_n}^{-1}(B)
 = \tfrac{\alpha}{c_\alpha} \int_{S(\R^n)}\lambda_n(d\xi)\int_0^\infty \1_{B}(r\xi)\frac{1}{r^{1+\alpha}}\, dr,
\end{align*}
where $\lambda_n$ is  uniformly distributed on the sphere $S(\R^n)$
and  is defined by $\lambda_n(C)=c_\alpha (  \nu\circ\pi_{e_1,\dots, e_n}^{-1})((1,\infty)C)$ for all $C\in\Borel(S(\R^n))$. From Part (d) of
Theorem \ref{th.stable-R} and \eqref{eq.char-project-e} we deduce  that for each  $\xi_0\in S(\R^n)$ we have
\begin{align}\label{eq.1-and-spectral}
 1=\int_{S(\R^n)} \abs{\scapro{\xi_0}{\xi}}^\alpha\, \lambda_n(\xi)
 = r_n \int_{S(\R^n)} \abs{\scapro{\xi_0}{\xi}}^\alpha\, \lambda_n^1(\xi),
\end{align}
where $\lambda_n^1:=\tfrac{1}{r_n}\lambda_n$ and $r_n:= \lambda_n(S(\R^n))$.
Let $Y_n=(Y_{n,1},\dots, Y_{n,n})$ be uniformly distributed on $S(\R^n)$. By choosing $\xi_0=(1,0,
\dots, 0)$ in \eqref{eq.1-and-spectral} and 
applying Lemma~\ref{le.uniform-distribution} 
 we obtain
\begin{align*}
 1= r_n E[\abs{Y_{n,1}}^\alpha]
 =\frac{\Gamma(\tfrac{n}{2})\Gamma(\tfrac{1+\alpha}{2})}{\Gamma(\tfrac{1}{2})\Gamma(\tfrac{n+\alpha}{2})},
\end{align*}
which completes the proof. 
\end{proof}


\section{Two representations}\label{se.representations}

The first representation of the canonical $\alpha$-stable cylindrical L{\'e}vy process is by subordination. For this purpose, let $W$ be the standard cylindrical  Brownian  motion on the Banach space $U$, i.e.\ a cylindrical L{\'e}vy process with characteristics $(0,\Id,0)$ where 
$\Id$ denotes the identity on $U$. Its characteristic function is 
given by
\begin{align*}
\phi_{W(t)}\colon U^\ast\to {\mathbb C},
\qquad \phi_{W(t)}(u^\ast)=\exp\left(-\tfrac{1}{2}\norm{u^\ast}^2\right).
\end{align*}

\begin{lemma}\label{le.subordination}
Let $W$ be the standard cylindrical Brownian motion on a separable 
Banach space $U$ and let $\ell $ be an independent, real-valued $\alpha/2$-stable subordinator with 
L{\'e}vy measure $\nu_\ell(dy)=\tfrac{2^{\alpha/2}\alpha/2}{\Gamma(1-\alpha/2)} y^{-1-\alpha/2}\, dy$ for $\alpha\in (0,2)$. Then
\begin{align*}
L(t)u^\ast:= W\big(\ell(t)\big)u^\ast\qquad\text{for all }u^\ast\in U^\ast, 
\end{align*}
defines a canonical $\alpha$-stable cylindrical L{\'e}vy process $(L(t):\, t\ge 0)$
in $U$.
\end{lemma}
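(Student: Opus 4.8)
The plan is to verify that the subordinated process $L(t)u^\ast := W(\ell(t))u^\ast$ is indeed a cylindrical Lévy process whose characteristic function matches the canonical $\alpha$-stable form $\phi_{L(t)}(u^\ast)=\exp(-t\norm{u^\ast}^\alpha)$. Let me sketch how I would prove this.

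First, I need to establish the core identity: computing $\phi_{L(t)}(u^\ast)$. Here's my thinking about the structure.

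The key computational step is conditioning on the subordinator. Since $\ell$ and $W$ are independent, for each fixed $u^\ast$ I would write:

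$$\phi_{L(t)}(u^\ast) = E[e^{iW(\ell(t))u^\ast}] = E\big[E[e^{iW(s)u^\ast}]\big|_{s=\ell(t)}\big] = E[e^{-\frac{1}{2}\ell(t)\norm{u^\ast}^2}].$$

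This is where the characteristic function of $W(s)$ — namely $\exp(-\frac{1}{2}s\norm{u^\ast}^2)$ — gets plugged in with $s = \ell(t)$. So the inner expectation becomes the Laplace transform of $\ell(t)$ evaluated at $\frac{1}{2}\norm{u^\ast}^2$.

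Now the subordinator part. An $\alpha/2$-stable subordinator with that specific Lévy measure $\nu_\ell(dy)=\frac{2^{\alpha/2}\alpha/2}{\Gamma(1-\alpha/2)}y^{-1-\alpha/2}\,dy$ has Laplace exponent $E[e^{-\lambda\ell(t)}] = \exp(-t\lambda^{\alpha/2})$. The hard part — and the step I'd be most careful about — is verifying that this precise normalizing constant yields exactly the Laplace exponent $\lambda^{\alpha/2}$ with coefficient $1$ (no stray constant), using the standard integral $\int_0^\infty (1-e^{-\lambda y})y^{-1-\alpha/2}\,dy = \Gamma(1-\alpha/2)\lambda^{\alpha/2}\cdot\frac{2}{\alpha}$ or equivalent. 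Substituting $\lambda = \frac{1}{2}\norm{u^\ast}^2$ gives $\exp(-t(\frac{1}{2}\norm{u^\ast}^2)^{\alpha/2}) = \exp(-t\cdot 2^{-\alpha/2}\norm{u^\ast}^\alpha)$, so I suspect the stated constant may be calibrated to absorb the $2^{-\alpha/2}$ factor; I would check that the $2^{\alpha/2}$ in $\nu_\ell$ compensates this exactly to produce $\exp(-t\norm{u^\ast}^\alpha)$. This constant-tracking is the only genuine obstacle.

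Finally, I would confirm the two structural requirements. To see $L$ is a cylindrical Lévy process, I would check that for any $u_1^\ast,\dots,u_n^\ast$, the $\R^n$-valued process $(L(t)u_1^\ast,\dots,L(t)u_n^\ast)$ is a Lévy process: this follows since $(W(s)u_1^\ast,\dots,W(s)u_n^\ast)$ is Brownian motion in $\R^n$ and subordination of a Lévy process by an independent subordinator yields a Lévy process (Bochner's subordination theorem in finite dimensions). Linearity and continuity of $u^\ast\mapsto L(t)u^\ast$ into $L^0_P(\Omega;\R)$ are inherited from $W$. Together with the characteristic function computation above, this identifies $L$ as the canonical $\alpha$-stable cylindrical Lévy process and completes the proof.
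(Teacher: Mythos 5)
Your proposal is correct and follows essentially the same route as the paper: condition on the independent subordinator, plug the Gaussian characteristic function $\exp(-\tfrac{1}{2}s\norm{u^\ast}^2)$ into the Laplace transform of $\ell(t)$, and invoke finite-dimensional subordination to get the cylindrical L\'evy property. Your constant-tracking is exactly right -- the normalisation in $\nu_\ell$ yields Laplace exponent $\tau(\beta)=2^{\alpha/2}\beta^{\alpha/2}$, which cancels the $2^{-\alpha/2}$ from evaluating at $\beta=\tfrac{1}{2}\norm{u^\ast}^2$ (the paper's displayed $\tau(\beta)=\beta^{\alpha/2}$ omits this factor, but the final identity $\phi_{L(t)}(u^\ast)=\exp(-t\norm{u^\ast}^\alpha)$ is as you computed).
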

\begin{proof}
By using independence of $W$ and $\ell$, Lemma 3.8 in \cite{DaveMarkus} shows that $L$ is a cylindrical L{\'e}vy process. The characteristic function of the subordinator $\ell$ can be analytically continued, such that
\begin{align*}
E[\exp(-\beta \ell(t))] = \exp(-t \tau(\beta))\qquad\text{for all }\beta>0, 
\end{align*} 
where the Laplace exponent $\tau$ is given by
\begin{align*}
 \tau(\beta)= \int_0^\infty (1-e^{-\beta s})\, \nu_\ell(ds)=\beta^{\alpha/2};
\end{align*}
see \cite[Th.24.11]{Sato}.
Independence of $W$ and $\ell$ implies that for each $t\ge 0$ and $u^\ast \in U^\ast$
the characteristic function $\phi_{L(t)}$ of $L(t)$ is given by
  \begin{align*}
    \phi_{L(t)}(u^\ast)
    &= \int_0^\infty E\left[e^{i W(s)u^\ast}\right]\, P_{\ell (t)}(ds)\notag \\
    &= \int_0^\infty e^{-\tfrac{1}{2}s \norm{u^\ast}^2}\, P_{\ell (t)}(ds)\notag 
    = \exp\left(-t\tau\left(\tfrac{1}{2}\norm{u^\ast}^2\right)\right)\notag
     = \exp\big(-t \norm{u^\ast}^\alpha\big),
  \end{align*}
which shows that $L$ is canonical $\alpha$-stable. 
\end{proof}

The second representation of the canonical $\alpha$-stable cylindrical L{\'e}vy process is based on the approach by L{\'e}vy space-time white noise, as it is defined for example in \cite{Albeverio-etal} and \cite{ApplebaumWu}.
\begin{definition}\label{de.Levy-White-Noise}
Let $(M,{\mathcal M},\nu)$ be a $\sigma$-finite measure space. A L{\'e}vy white noise
on a $\sigma$-finite measure space $(E,{\mathcal E},\sigma)$ with intensity $\nu$
is a random measure $Y\colon {\mathcal E}\times \Omega\to \R$  of the form
\begin{align*}
Y(B)=W(B)+\int_{B\times M} a(x,y)\,N(dx,dy)+\int_{B\times M} b(x,y)\,
 (\sigma\otimes \nu)(dx,dy), 
\end{align*} 
where 
\begin{minipage}[t]{9cm}
\begin{enumerate}
\item[{\rm (1)}] $W\colon {\mathcal E}\times \Omega\to \R$ is 
a Gaussian white noise;
\item[{\rm (2)}] $N\colon ({\mathcal E}\otimes {\mathcal M})\times \Omega\to \N_0\cup\{\infty\}$ is a Poisson random measure on $E\times M$ with intensity measure $\sigma\otimes \nu$;
\item[{\rm (3)}] $a,b\colon E\times U\to \R$ are measurable functions.
\end{enumerate}
\end{minipage}
\end{definition}

In the above Definition \ref{de.Levy-White-Noise} we take $M=\R$,
${\mathcal M}=\Borel(\R)$  and 
\begin{align*}
\nu(dy)=\frac{1}{c} \frac{1}{\abs{y}^{\alpha+1}}\,dy
\qquad\text{for }c:=2\Gamma(\alpha)\cos(\tfrac{\pi\alpha}{2}).
\end{align*}
For some set $\O\subseteq \R^d$ let $N$ be a Poisson random measure on $\big([0,\infty)\times\O\big)\times \R$
with intensity measure $\sigma\otimes \nu$ where $\sigma:=dt\otimes dx$. 
We call the L{\'e}vy white noise $Y\colon \Borel([0,\infty)\times\O)\times\Omega\to [0,\infty)$ defined by
\begin{align*}
 Y(B)=\begin{cases}\displaystyle
 \int_{B\times \R} y \,N(dt,dx,dy), &\text{if }\alpha<1,\\
\displaystyle 
 \int_{B\times \R} y\, \big(N(dt,dx,dy)- dt\,dx\,\nu(dy)\big),
 &\text{if }\alpha\ge 1, 
\end{cases}
\end{align*}
 the {\em canonical $\alpha$-stable space-time L{\'e}vy white noise 
on $\O$}. 

\begin{lemma}\label{le.white-noise}
Let $Y$ be the canonical $\alpha$-stable L{\'e}vy space-time white noise on 
a set $\O\subseteq\R^d$ for $\alpha\in (1,2)$. Then there exists a canonical $\alpha$-stable cylindrical L{\'e}vy process $L$ in $L^{\alpha^\prime}(\O)$ for $\alpha^\prime:=\tfrac{\alpha}{\alpha-1}$ such that 
\begin{align*}
 L(t)\1_A= Y([0,t]\times A)\qquad\text{for all }t\ge 0 \text{ and bounded sets }
  A\in \Borel(\O). 
\end{align*}
\end{lemma}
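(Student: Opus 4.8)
The plan is to show that the cylindrical process $L$ defined by $L(t)\1_A = Y([0,t]\times A)$ on indicator functions extends to a canonical $\alpha$-stable cylindrical Lévy process on $L^{\alpha'}(\O)$. The natural strategy is to compute the characteristic function of the candidate process and verify it equals $\exp(-t\norm{u^\ast}^\alpha)$ for $u^\ast \in (L^{\alpha'}(\O))^\ast = L^\alpha(\O)$.

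First I would treat the elementary case. For a single bounded set $A$, the random variable $Y([0,t]\times A)$ is the compensated integral against a Poisson random measure with intensity $\sigma\otimes\nu$ restricted to $[0,t]\times A\times \R$. By the Lévy--Khintchine formula for Poisson integrals (using that $\alpha\in(1,2)$, so the integral is compensated), its characteristic function is
\begin{align*}
E\left[\exp\big(i\beta\, Y([0,t]\times A)\big)\right]
=\exp\left(t\,\abs{A}\int_\R \big(e^{i\beta y}-1-i\beta y\big)\,\nu(dy)\right),
\end{align*}
where $\abs{A}$ is the Lebesgue measure of $A$. The key computation is that with the specific normalisation $c=2\Gamma(\alpha)\cos(\tfrac{\pi\alpha}{2})$, the inner integral evaluates to $-\abs{\beta}^\alpha$; this is the classical one-dimensional symmetric $\alpha$-stable formula. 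Hence $E[\exp(i\beta\, Y([0,t]\times A))]=\exp(-t\abs{A}\,\abs{\beta}^\alpha)$, and since $\norm{\beta\1_A}_{L^\alpha(\O)}^\alpha = \abs{\beta}^\alpha\abs{A}$, this already matches $\exp(-t\norm{\beta\1_A}^\alpha)$.

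Next I would extend from indicators to simple functions $u^\ast=\sum_{j=1}^n \beta_j \1_{A_j}$ with the $A_j$ pairwise disjoint and bounded. Setting $L(t)u^\ast := \sum_j \beta_j Y([0,t]\times A_j)$, disjointness gives independence of the component Poisson integrals (they charge disjoint regions of $\O$), so the characteristic functions multiply, and the additivity of $\abs{A_j}$ over the disjoint pieces yields $E[\exp(iL(t)u^\ast)] = \exp(-t\sum_j\abs{\beta_j}^\alpha\abs{A_j}) = \exp(-t\norm{u^\ast}_{L^\alpha(\O)}^\alpha)$. One must also check linearity and the finite-dimensional Lévy property in Definition \ref{le.subordination}'s sense, which again reduces to the Poisson-integral structure.

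The main obstacle, and the step I expect to require the most care, is the passage from simple functions to all of $u^\ast\in L^\alpha(\O)$. Simple functions are dense in $L^\alpha(\O)$, and for a Cauchy sequence $u_m^\ast\to u^\ast$ one wants $L(t)u_m^\ast$ to converge in $L^0_P(\Omega;\R)$ to a limit defining $L(t)u^\ast$. The convergence should follow from the explicit form of the characteristic functions: since $\phi_{L(t)u_m^\ast}(\beta)=\exp(-t\abs{\beta}^\alpha\norm{u_m^\ast}^\alpha)\to \exp(-t\abs{\beta}^\alpha\norm{u^\ast}^\alpha)$ uniformly on compacts, and more precisely $L(t)(u_m^\ast-u_k^\ast)$ is symmetric $\alpha$-stable with scale $\norm{u_m^\ast-u_k^\ast}_{L^\alpha}\to 0$, convergence in distribution to a Dirac mass at $0$ forces convergence in probability. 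This establishes continuity of $u^\ast\mapsto L(t)u^\ast$ from $L^\alpha(\O)$ into $L^0_P(\Omega;\R)$, so $L$ is a genuine cylindrical random variable, and its characteristic function is $\exp(-t\norm{u^\ast}^\alpha)$ by continuity. Verifying the Lévy property in finitely many directions then completes the identification of $L$ as canonical $\alpha$-stable.
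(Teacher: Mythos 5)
Your proposal is correct and follows essentially the same route as the paper: define $L(t)$ on simple functions via the Poisson integral, compute the characteristic function to get $\exp(-t\norm{u^\ast}_{L^\alpha}^\alpha)$ using the stable normalisation of $\nu$, extend by density and continuity to all of $L^\alpha(\O)=(L^{\alpha'}(\O))^\ast$, and verify the finite-dimensional L\'evy property via independence of the white noise over disjoint sets. The only notable difference is that you make the continuity of $u^\ast\mapsto L(t)u^\ast$ explicit (via the observation that $L(t)(u_m^\ast-u_k^\ast)$ is symmetric $\alpha$-stable with scale $\norm{u_m^\ast-u_k^\ast}_{L^\alpha}$), a step the paper asserts without detail.
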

\begin{proof}
Let $S(\O)$ be the space of simple functions in $L^\alpha(\O)$ of the form
\begin{align}\label{eq.u-simple}
 u^\ast:=\sum_{k=1}^n \beta_k\1_{A_k},
\end{align}
for some $\beta_k\in\Rp$ and disjoint sets $A_k\in \Borel(\O)$. For $u^\ast\in S(\O)$  and $t\ge 0$ 
we define
\begin{align}\label{de.Levy-Y}
L(t)u^\ast:= \sum_{k=1}^n \beta_k Y([0,t]\times A_k). 
\end{align}
Define the function $m_\beta\colon\R\to\R$ by $m_\beta(x)=\beta x$ for some $\beta\in \Rp$. 
Then by using the invariance $\beta^{-\alpha} (\nu\circ m_\beta^{-1})=\nu$, see  \cite[Th.14.3]{Sato},
the independence of the L{\'e}vy white noise and the symmetry of $\nu$, we obtain
\begin{align*}
&\phi_{L(t)}(u^\ast)\\
&= \prod_{k=1}^n \exp\left( \int_{[0,t]\times A_k\times \R}
\left(e^{i\beta_k y} -1 -i\beta_k y \1_{B_{\R}}(y)\right)\, \nu(dy)\,dx\, ds\right)\\
&= \prod_{k=1}^n \exp\left( t\left(\int_{\O} \1_{A_k}(x) \,dx \right)\beta_k^{\alpha} \int_{\R}
\left(e^{i y} -1 -i y \1_{B_{\R}}(\beta_k^{-\alpha} y)\right)\, \beta_k^{-\alpha} (\nu\circ m_{\beta_k}^{-1})(dy)\right)\\
&= \prod_{k=1}^n \exp\left( t\left(\int_{\O} \beta_k^{\alpha} \1_{A_k}(x) \,dx \right) \int_{\R}
\left(e^{i y} -1 -i y \1_{B_{\R}}( y)\right)\, \nu(dy)\right)\\
&= \exp\left( t\left(\int_{\O} \abs{u^\ast(x)}^{\alpha} \,dx \right) \int_{\R}
\left(e^{i y} -1 -i y \1_{B_{\R}}( y)\right)\, \nu(dy)\right).
\end{align*}
By applying Lemma 14.11 in \cite{Sato} we obtain $\phi_{L(t)}(u^\ast)=\exp(-t\norm{u^\ast}_{L^\alpha}^\alpha)$ for all simple functions $u^\ast\in S(\O)$. By using the linearity 
of $L(t)$ we derive that $L(t)\colon S(\O)\to L^0_P(\Omega)$ is a linear and continuous operator  which shows that $L$ can be continued to a linear and continuous operator  on 
$L^\alpha(\O)$ satisfying 
\begin{align*}
\phi_{L(t)}(u^\ast)=\exp(-t\norm{u^\ast}_{L^\alpha}^\alpha)
\qquad\text{for all }u^\ast\in L^\alpha(\O). 
\end{align*}
Let $0\le t_1\le t_2\le \cdots \le t_n$ and $u_j^\ast\in S(\O)$. By independence of 
the L{\'e}vy white noise for disjoint sets it follows that 
\begin{align*}
L(t_1)u_1^\ast,\big(L(t_2)-L(t_{1})\big)u_2^\ast  \dots, \big(L(t_n)-L(t_{n-1})\big)u_n^\ast
\end{align*}
are independent. By approximating an arbitrary function $u^\ast\in L^\alpha(\O)$ by simple 
functions it follows that $L$ has independent increments. Moreover, 
from the very definition in \eqref{de.Levy-Y} it follows that for each 
$u^\ast\in S(\O)$, the stochastic process $(L(t)u^\ast:\ t\ge 0)$ is a L{\'e}vy process in $\R$. 
Again, by approximating an arbitrary function $u^\ast\in L^\alpha(\O)$ by simple 
functions, the same conclusion holds for $(L(t)u^\ast:\, t\ge 0)$ and $u^\ast \in L^\alpha(\O)$. 
Together with the independent increments derived above, this implies by Corollary 3.8  in \cite{DaveMarkus} that $L$ is a cylindrical L{\'e}vy process in $L^{\alpha^\prime}(\O)$. 
\end{proof}

\begin{remark}
The canonical $\alpha$-stable space-time L{\'e}vy white noise corresponds to the noise considered in 
\cite{Balan} in the symmetric case. Although the construction in Lemma 
\ref{le.white-noise} follows the corresponding relation between space-time Gaussian white noise 
and cylindrical Brownian motion, the resulting cylindrical L\'evy process does not live
in a Hilbert space, such as $L^2(\O)$ for $\O\subseteq \R^d$. 
\end{remark}

\section{The stochastic Cauchy problem}

In this section we consider the stochastic Cauchy problem
\begin{align}\label{eq.Cauchy}
\begin{split}
 dX(t)&=AX(t)\,dt + \,dL(t)\qquad\text{for } t \in [0,T],\\
  X(0)&=x_0,
\end{split}
\end{align}
where $A$ is the generator of a strongly continuous semigroup 
$(T(t))_{t\ge 0}$ on a separable Hilbert space $U$ and $x_0\in U$ denotes the initial condition. The random noise $L$ is a canonical $\alpha$-stable cylindrical 
L{\'e}vy process as introduced in the previous section.

A theory of stochastic integration  for deterministic functions with respect to  cylindrical L{\'e}vy processes is introduced in our work \cite{Riedle15}. Based on this  theory, we obtain that if the stochastic convolution integral
\begin{align*}
 Y(t):=\int_0^t T(t-s)\,L(s) \qquad\text{for }t\in [0,T],
\end{align*}
exists, the stochastic process $(T(t)x_0+ Y(t):\, t\ge 0)$  can be considered as a mild solution. By a result in 
\cite{Umesh-diss} it also follows that the mild solution is a weak solution; 
however, this is of less concern in this work. 

More precisely, if  $L$ is a  cylindrical L{\'e}vy processes with  characteristics $(0,0,\nu)$, then a function $f\colon [0,T]\to L(U,U)$ is stochastically integrable if and only if 
\begin{align}\label{eq.int-condition}
\limsup_{m\to\infty} \sup_{n\ge m}\int_0^T \int_U \left(\sum_{k=m}^{n} 
\scapro{u}{f^\ast(s)e_k}^2\wedge 1\right)\,\nu(du)\,ds=0,
\end{align}
for an orthonormal basis $(e_k)_{k\in\N}$ of $U$; see \cite[Th.5.10]{Riedle15}.
In our case of a canonical $\alpha$-stable cylindrical L{\'e}vy process as driving noise
we obtain the following equivalent conditions:
\begin{theorem}\label{th.exist-solution}
Assume that there exist an orthonormal basis $(e_k)_{k\in\N}$ of $U$ 
and $(\lambda_k)_{k\in\N}\subseteq [0,\infty)$ 
with $T^\ast(t)e_k=e^{-\lambda_k t} e_k$ for all $t\ge 0$ and $k\in\N$. 
Then there exists a mild (and weak) solution of \eqref{eq.Cauchy} if and only if
\begin{align*}
\int_0^T \norm{T(s)}_{{\rm HS}}^{\alpha}\,ds<\infty. 
\end{align*}
\end{theorem}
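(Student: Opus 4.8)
The strategy is to apply the integrability criterion \eqref{eq.int-condition} directly to the function $f(s)=T(s)$, exploiting both the spectral structure of the semigroup and the explicit form of the cylindrical L\'evy measure $\nu$ computed in Lemma~\ref{le.projected-spectral}. The key algebraic simplification is that, because $T^\ast(s)e_k=e^{-\lambda_k s}e_k$, the inner products appearing in \eqref{eq.int-condition} decouple: $\scapro{u}{T^\ast(s)e_k}=e^{-\lambda_k s}\scapro{u}{e_k}$. Thus the partial sums $\sum_{k=m}^n \scapro{u}{T^\ast(s)e_k}^2$ become $\sum_{k=m}^n e^{-2\lambda_k s}\scapro{u}{e_k}^2$, and the Hilbert--Schmidt norm simplifies to $\norm{T(s)}_{\rm HS}^2=\sum_{k\in\N} e^{-2\lambda_k s}$.

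First I would reduce the truncated integral $\int_U\big(\sum_{k=m}^n \scapro{u}{T^\ast(s)e_k}^2\wedge 1\big)\,\nu(du)$ to a one-dimensional computation. Since the integrand depends on $u$ only through the finitely many coordinates $\scapro{u}{e_m},\dots,\scapro{u}{e_n}$, I can push $\nu$ forward under $\pi_{e_m,\dots,e_n}$ and use the spectral/polar formula from Lemma~\ref{le.projected-spectral}. Writing the coordinates in polar form $u\mapsto r\xi$ with $\xi\in S(\R^{n-m+1})$ and integrating out the radial variable against $r^{-1-\alpha}\,dr$, the crucial observation is that $\int_0^\infty\big(r^2 q(\xi,s)\wedge 1\big)r^{-1-\alpha}\,dr = C_\alpha\, q(\xi,s)^{\alpha/2}$ for a constant $C_\alpha$ independent of $\xi$ and $s$, where $q(\xi,s)=\sum_k e^{-2\lambda_k s}\xi_k^2$. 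This is the step that converts the square-plus-truncation structure of \eqref{eq.int-condition} into the $\alpha$-th power structure appearing in the desired condition. After integrating over the sphere against $\lambda_n$ and applying the homogeneity/rotational-invariance normalisation already established in Lemma~\ref{le.projected-spectral}, I expect the inner integral to collapse to a constant multiple of $\big(\sum_{k=m}^n e^{-2\lambda_k s}\big)^{\alpha/2}$.

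With that reduction, condition \eqref{eq.int-condition} becomes
\begin{align*}
\limsup_{m\to\infty}\sup_{n\ge m}\int_0^T\Big(\sum_{k=m}^n e^{-2\lambda_k s}\Big)^{\alpha/2}\,ds=0,
\end{align*}
and I would argue that this holds if and only if $\int_0^T\big(\sum_{k\in\N}e^{-2\lambda_k s}\big)^{\alpha/2}\,ds<\infty$, i.e.\ if and only if $\int_0^T\norm{T(s)}_{\rm HS}^\alpha\,ds<\infty$. One direction is immediate by monotone convergence: the tail sums increase to the full sum, so finiteness of the full integral forces the truncated suprema to vanish as $m\to\infty$ by dominated convergence. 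For the converse, I would observe that the $\limsup$ being zero already requires each tail integral $\int_0^T\big(\sum_{k\ge m}e^{-2\lambda_k s}\big)^{\alpha/2}\,ds$ to be finite for large $m$, and then recover the finiteness of the full integral by adding back the finitely many initial terms, using that $\big(\sum_{k=1}^{m-1}e^{-2\lambda_k s}\big)^{\alpha/2}\le (m-1)^{\alpha/2}$ is bounded on $[0,T]$.

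\emph{Main obstacle.} The delicate point is the interchange between the supremum over $n\ge m$ and the radial integration, together with the passage to the infinite sum. The polar computation is clean for each fixed finite block, but I must ensure the constant $C_\alpha$ genuinely does not depend on the dimension $n-m+1$, which is where the dimension-dependent normalisation $\lambda_n(S(\R^n))$ from Lemma~\ref{le.projected-spectral} must cancel exactly against the factor $\tfrac{\alpha}{c_\alpha}$. Verifying this cancellation---so that the finite-dimensional projections are mutually consistent and yield a dimension-free identity---is the computational heart of the argument and the step most likely to require care.
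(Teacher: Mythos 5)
Your overall strategy coincides with the paper's: reduce \eqref{eq.int-condition} via the finite-dimensional projections $\nu\circ\pi_{e_m,\dots,e_n}^{-1}$, use the polar representation from Lemma~\ref{le.projected-spectral}, and carry out the radial integration $\int_0^\infty\bigl(r^2q\wedge 1\bigr)r^{-1-\alpha}\,dr=\tfrac{2}{\alpha(2-\alpha)}\,q^{\alpha/2}$, whose constant is indeed dimension-free. The final reduction from $\limsup_m\sup_{n\ge m}\int_0^T\bigl(\sum_{k=m}^n e^{-2\lambda_k s}\bigr)^{\alpha/2}ds=0$ to $\int_0^T\norm{T(s)}_{\rm HS}^\alpha\,ds<\infty$ is also fine.

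There is, however, a genuine gap at the step you describe as the inner integral ``collapsing to a constant multiple of $\bigl(\sum_{k=m}^n e^{-2\lambda_k s}\bigr)^{\alpha/2}$.'' This is false as an identity: for $\alpha<2$ the spherical average $\int_{S(\R^{N})}\bigl(\sum_k a_k\xi_k^2\bigr)^{\alpha/2}\,\lambda_{N}(d\xi)$ is not a function of $\sum_k a_k$ alone --- it depends on how the mass is distributed among the $a_k$ (compare $a=(1,0)$ with $a=(\tfrac12,\tfrac12)$ for $N=2$, $\alpha=1$). Consequently there is no exact dimension-free constant to verify, and the ``if and only if'' does not follow from your argument as written. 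What the paper actually does is sandwich this spherical average between constant multiples of $\bigl(\sum_k a_k\bigr)^{\alpha/2}$ by two separate applications of Jensen's inequality for the concave map $\beta\mapsto\beta^{\alpha/2}$: once with respect to the normalised uniform measure on the sphere (upper bound, using $E[Y_k^2]=\tfrac1N$ from Lemma~\ref{le.uniform-distribution}, producing the constant $r_{N}/N^{\alpha/2}\to 1$), and once with respect to the discrete probability measure $\{e^{-2\lambda_k s}/c_{m,n}(s)\}$ on the indices (lower bound, using $E[\abs{Y_k}^\alpha]$ and the normalisation $r_N\,E[\abs{Y_{N,1}}^\alpha]=1$ built into Lemma~\ref{le.projected-spectral}). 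You correctly sensed that the dimension dependence of $\lambda_N(S(\R^N))$ is the delicate point, but the resolution is this two-sided Jensen comparison rather than an exact cancellation; supplying it is necessary to close the proof.
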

\begin{proof}
Lemma \ref{le.projected-spectral} implies for each $m,n\in\N$ with $m\le n$  that
\begin{align*}
&\int_0^T \int_U \left(\sum_{k=m}^{n} 
\scapro{u}{T^\ast(s)e_k}^2\wedge 1\right)\,\nu(du)\,ds\\
&\qquad =\int_0^T \int_U \left(\sum_{k=m}^{n} e^{-2\lambda_k s}
\scapro{u}{e_k}^2\wedge 1\right)\,\nu(du)\,ds\\
&\qquad =\int_0^T \int_{\R^{n-m+1}} \left(\sum_{k=m}^{n} e^{-2\lambda_k s}
\beta_k^2\wedge 1\right)\,\nu\circ \pi_{e_m,\dots, e_n}^{-1}(d\beta)\,ds\\
&\qquad =\tfrac{\alpha}{c_\alpha}\int_0^T \int_{S(\R^{n-m+1})}\int_0^\infty
 \left(\sum_{k=m}^{n} e^{-2\lambda_k s} r^2\xi_k^2\wedge 1\right)\,\frac{1}{r^{1+\alpha}}\,dr \,\lambda_{n-m+1}(d\xi) \,ds\\
&\qquad =\tfrac{2}{c_\alpha (2-\alpha)}\int_0^T \int_{S(\R^{n-m+1})}
 \left(\sum_{k=m}^{n} e^{-2\lambda_k s} \xi_k^2\right)^{\alpha/2} \,\lambda_{n-m+1}(d\xi) \,ds\\
&\qquad =:I_{m,n}. 
\end{align*}
In the following, we establish that for all $m$, $n\in\N$ we have
\begin{align}\label{eq.I-both-sides}
\begin{split}
\tfrac{2}{c_\alpha (2-\alpha)} \int_0^T \left( \sum_{k=m}^n \norm{T^\ast(s)e_k}^2 \right)^{\alpha/2}  \!\! ds
 \le I_{m,n}
 &\le c_{n-m}\int_0^T \left( \sum_{k=m}^n \norm{T^\ast(s)e_k}^2 \right)^{\alpha/2}  \! \!ds,
\end{split}
\end{align}
where $c_{n-m}\to 1$ as $m,n\to\infty$. 

Define $\lambda_{n-m+1}^1:=\tfrac{1}{r_{n-m+1}}\lambda_{n-m+1}$ where $r_{n-m+1}:=\lambda_{n-m+1}\big(S(\R^{n-m+1})\big)$.
By applying Jensen's inequality to the concave function $\beta\mapsto \beta^{\alpha/2}$  it follows from Lemma \ref{le.uniform-distribution} that
\begin{align*}
&\int_0^T \int_{S(\R^{n-m+1})}
 \left(\sum_{k=m}^n e^{-2\lambda_k s} \xi_k^2\right)^{\alpha/2} \,\lambda_{n-m+1}(d\xi) \,ds\\
 &\qquad \le r_{n-m+1}\int_0^T \left( \sum_{k=m}^n e^{-2\lambda_k s} \int_{S(\R^{n-m+1})}
  \xi_k^2\,\lambda_{n-m+1}^1(d\xi)\right)^{\alpha/2}  \,ds\\
 &\qquad = r_{n-m+1} \int_0^T \left( \frac{1}{n-m+1}\sum_{k=m}^n e^{-2\lambda_k s} \right)^{\alpha/2}  \,ds\\
 &\qquad =\frac{r_{n-m+1}}{(n-m+1)^{\alpha/2}} \int_0^T \left( \sum_{k=m}^n \norm{T^\ast(s)e_k}^2 \right)^{\alpha/2}  \,ds.
\end{align*}
Consequently, we obtain the upper bound in \eqref{eq.I-both-sides} with $c_{n-m}:=\tfrac{r_{n-m+1}}{(n-m+1)^{\alpha/2}}$. Since $\frac{\Gamma(x+\beta)}{\Gamma(x)x^\beta}\to 1$ as $x\to \infty $, 
Lemma \ref{le.projected-spectral} implies $c_{n-m}\to 1$ as $m,n\to \infty$.

For establishing the lower bound, define $c_{m,n}(s):=e^{-2\lambda_m s} + \dots + e^{-2\lambda_n s}$. 
We again apply Jensen's inequality to the same concave function $\beta\mapsto \beta^{\alpha/2}$ 
but with respect to the discrete probability measure $\{c_{m,n}^{-1}(s)e^{-2\lambda_m s}, 
\dots, c_{m,n}^{-1}(s)e^{-2\lambda_n s}\}$. In this way, by applying 
Lemma \ref{le.projected-spectral} and \ref{le.uniform-distribution} we deduce
\begin{align*}
&\int_0^T \int_{S(\R^{n-m+1})}
 \left(\sum_{k=m}^n e^{-2\lambda_k s} \xi_k^2\right)^{\alpha/2} \,\lambda_{n-m+1}(d\xi) \,ds\\
 &\qquad \ge 
 \int_0^T \int_{S(\R^{n-m+1})} (c_{m,n}(s))^{\alpha/2}
 \sum_{k=m}^n \frac{e^{-2\lambda_k s}}{c_{m,n}(s)} \xi_k^\alpha  \,\lambda_{n-m+1}(d\xi) \,ds\\
 &\qquad =
r_{n-m+1} \int_0^T (c_{m,n}(s))^{\alpha/2}
 \sum_{k=m}^n \frac{e^{-2\lambda_k s}}{c_{m,n}(s)}\int_{S(\R^{n-m+1})}  \xi_k^\alpha  \,\lambda_{n-m+1}^1(d\xi) \,ds\\
  &\qquad = r_{n-m+1}\frac{\Gamma(\tfrac{1+\alpha}{2})\Gamma(\tfrac{n}{2})}{\Gamma(\tfrac{1}{2})\Gamma(\tfrac{n+\alpha}{2})}  \int_0^T 
  (c_{m,n}(s))^{\alpha/2}\, ds \\
 &\qquad =
 \int_0^T \left( \sum_{k=m}^n \norm{T^\ast(s)e_k}^2 \right)^{\alpha/2}  \,ds.
\end{align*}
An application of \cite[Th.5.10]{Riedle15}, as summarised in \eqref{eq.int-condition}, completes the proof.
\end{proof}

\begin{example}
We consider the stochastic heat equation on a bounded 
domain $\O\subseteq \R^d$ with smooth boundary $\partial \O$ for some $d\in\N$. In this case, the generator $A$ is given by the Laplace operator $\Delta$ on $U:=L^2(\O)$. Thus, there exists an orthonormal basis $(e_k)_{k\in\N}$ of $U$ consisting of eigenvectors of $A$. According to Weyl's law, the eigenvalues $\lambda_k$ satisfy $\lambda_k\sim c k^{2/d}$ for $k\to\infty$ and a constant $c>0$. Consequently, we can assume that
$
 \lambda_k=c_kk^{2/d}
$
for constants $c_k$ with $c_k\in [a,b]$ for all $k\in\N$ and 
$0<a<b$. 

By the integral test for convergence of series we obtain for each $s>0$ that
\begin{align*}
\norm{T(s)}_{{\rm HS}}^2
= \sum_{k=1}^\infty e^{-2 s c_k k^{d/2}}
\le \int_0^\infty e^{-2sa x^{d/2}}\,dx
= \frac{2\Gamma(\tfrac{d}{2})}{d(2a)^{d/2}s^{d/2}}.
\end{align*}
 Analogously, we conclude 
for each $s>0$ that
\begin{align*}
\norm{T(s)}_{{\rm HS}}^2
= \sum_{k=1}^\infty e^{-2 s c_k k^{d/2}}
\ge -1+ \int_0^\infty e^{-2sb x^{d/2}}\,dx
= -1+ \frac{2\Gamma(\tfrac{d}{2})}{d(2b)^{d/2}s^{d/2}}.
\end{align*}
Consequently, we can deduce from Theorem \ref{th.exist-solution}
that there exists a mild solution of \eqref{eq.Cauchy} for $A=-\Delta$ if and only if
$
\alpha d< 4. 
$
In accordance with other works, e.g.\ \cite{Balan} or \cite{Mytnik}, the smaller the stable index $\alpha$ is the larger dimensions $d$ can be chosen 
in the condition for guaranteeing the existence of a weak solution. This is due to the smoother trajectories of stable processes for smaller stable index $\alpha$. 

Note that the sufficiency of the condition $\alpha d< 4$ can also be derived from results in \cite{BrzZab10}. Due to the approach of embedding the cylindrical L\'evy process $L$ in a larger space in \cite{BrzZab10}, the derivation is less direct than here; see Corollary 6.5 in \cite{Riedle15}. However, the work \cite{BrzZab10} provides further results on the spatial regularity of the weak solution; see e.g.\ Theorem 5.14.

\end{example}

\section{Irregularities of the trajectories}

In the work \cite{BrzZab10} it was observed that the solution 
of \eqref{eq.Cauchy} does not have a modification with c\`adl\`ag paths in the underlying Hilbert space $U$. We strengthen this result that the solution does not even have 
a c\`adl\`ag modification in a much weaker sense:
\begin{theorem}\label{th.nocadlagcond}
Assume that there exist an orthonormal basis $(e_k)_{k\in\N}$ of $U$ 
and $(\lambda_k)_{k\in\N}\subseteq [0,\infty)$ 
with $T^\ast(t)e_k=e^{-\lambda_k t} e_k$ for all $t\ge 0$ and $k\in\N$.
Then there does not exists a modification $\widetilde{X}$ of the 
mild solution of \eqref{eq.Cauchy} such that for each $u^\ast\in U^\ast$ 
the stochastic process $(\scapro{\widetilde{X}(t)}{u^\ast}:\, t\in [0,T])$ 
has c{\`a}dl{\`a}g paths.
\end{theorem}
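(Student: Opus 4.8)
The plan is to argue by contradiction, reducing the claim to the stochastic convolution and then exhibiting a single random time at which infinitely many coordinates must jump by independent, non-degenerate Gaussian amounts. Suppose such a modification $\widetilde X$ exists. Since $(T(t))_{t\ge 0}$ is strongly continuous, $t\mapsto T(t)x_0$ is continuous in $U$, so $t\mapsto\scapro{T(t)x_0}{u^\ast}$ is continuous for every $u^\ast$; hence $\widetilde Y(t):=\widetilde X(t)-T(t)x_0$ is a $U$-valued modification of the stochastic convolution $Y$ whose projections $f_k(t):=\scapro{\widetilde Y(t)}{e_k}$ are again c\`adl\`ag. Because $f_k$ is c\`adl\`ag and agrees for each fixed $t$ with $\scapro{Y(t)}{e_k}$, the two processes are indistinguishable and thus have identical jumps. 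Using $T^\ast(t)e_k=e^{-\lambda_k t}e_k$ one computes $\scapro{Y(t)}{e_k}=\int_0^t e^{-\lambda_k(t-s)}\,d\ell_k(s)$, where $\ell_k(s):=L(s)e_k$ is a one-dimensional symmetric $\alpha$-stable L\'evy process; the jump of this Ornstein--Uhlenbeck integral at a time $\tau$ equals $\Delta\ell_k(\tau)$, since the integrand reduces to $1$ at $s=\tau$.

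Next I would realise the noise by subordination (Lemma \ref{le.subordination}), writing $L(s)=W(\ell(s))$ for an $\alpha/2$-stable subordinator $\ell$ independent of the cylindrical Brownian motion $W$. Fix a jump time $\tau_0\in[0,T]$ of $\ell$ (for instance the one of maximal size, which exists almost surely) and put $\delta_0:=\ell(\tau_0)-\ell(\tau_0-)>0$. Then $\Delta\ell_k(\tau_0)=\big(W(\ell(\tau_0))-W(\ell(\tau_0-))\big)e_k$, and from $\phi_{W(t)}(u^\ast)=\exp(-\tfrac12\norm{u^\ast}^2)$ the family $\big(\Delta\ell_k(\tau_0)\big)_{k\in\N}$ is, conditionally on $\ell$, a sequence of independent $N(0,\delta_0)$ variables. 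Consequently $\sum_{k=1}^\infty\big(\Delta f_k(\tau_0)\big)^2=\sum_{k=1}^\infty\big(\Delta\ell_k(\tau_0)\big)^2=\infty$ almost surely, being a sum of i.i.d.\ positive variables with mean $\delta_0>0$. In words, at the single time $\tau_0$ infinitely many coordinates jump, and the vector of jumps fails to lie in $U$.

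The contradiction would then come from controlling the two endpoints of this jump in $U$. As $\widetilde Y$ is a genuine $U$-valued process, $\widetilde Y(\tau_0)\in U$ for every realisation, so $\sum_k f_k(\tau_0)^2=\norm{\widetilde Y(\tau_0)}^2<\infty$. For the left endpoint I would show that $\widetilde Y(\tau_0-):=\lim_{t\uparrow\tau_0}\widetilde Y(t)$ exists in $U$; its coordinates are the scalar left limits $f_k(\tau_0-)$, whence $\sum_k f_k(\tau_0-)^2<\infty$. Combining the two gives $\sum_k\big(f_k(\tau_0)-f_k(\tau_0-)\big)^2<\infty$, contradicting the divergence above and proving the theorem.

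The main obstacle is precisely the existence of this left limit in $U$. To establish it I would identify $\lim_{t\uparrow\tau_0}Y(t)$ with the cylindrical stochastic integral $\int_{[0,\tau_0)}T(\tau_0-s)\,dL(s)$: since the mild solution is assumed to exist, Theorem \ref{th.exist-solution} gives $\int_0^T\norm{T(u)}_{{\rm HS}}^\alpha\,du<\infty$, so the integrand $s\mapsto T(\tau_0-s)$ satisfies the integrability criterion \eqref{eq.int-condition} on $[0,\tau_0)$ and the integral defines an element of $U$. The convergence $Y(t)\to\int_{[0,\tau_0)}T(\tau_0-s)\,dL(s)$ as $t\uparrow\tau_0$ should then follow from a dominated-convergence argument within the integration theory of \cite{Riedle15}, the point being that the single jump of $L$ at $\tau_0$ carries zero Lebesgue mass in $s$ and hence does not contribute to the left limit. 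The remaining care is that $\tau_0$ is random, but since $\tau_0\le T$ and the controlling integral up to $T$ is finite for every outcome, the bound is uniform and the argument goes through.
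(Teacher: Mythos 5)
Your reduction to the coordinate processes and the identification of their jumps with those of the driving noise matches the paper, and your subordination trick (conditionally on the subordinator, the coordinate jumps at a jump time $\tau_0$ of $\ell$ are i.i.d.\ $N(0,\delta_0)$, so $\sum_k(\Delta f_k(\tau_0))^2=\infty$ a.s.) is a legitimate and rather elegant alternative to the paper's route, which instead computes $\nu_n(\{\abs{\beta}>c\})\sim c^{-\alpha}n^{\alpha/2}\to\infty$ from Lemma \ref{le.projected-spectral} to show that the largest jump of $L_n$ blows up with $n$. However, the final step of your plan has a genuine gap. Your contradiction needs \emph{both} $\sum_k f_k(\tau_0)^2<\infty$ and $\sum_k f_k(\tau_0-)^2<\infty$. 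The hypothesis only gives, for each fixed $u^\ast$, a scalar c\`adl\`ag path; this yields the existence of each individual left limit $f_k(\tau_0-)$ but says nothing about the square-summability of the sequence $(f_k(\tau_0-))_k$, i.e.\ about the existence of a left limit in the norm of $U$. You correctly flag this as the main obstacle, but your proposed remedy does not work. In the integration theory of \cite{Riedle15} the integrand lives on a Lebesgue measure space, so $\int_{[0,\tau_0)}T(\tau_0-s)\,dL(s)$ is the same object as $\int_{[0,\tau_0]}T(\tau_0-s)\,dL(s)=Y(\tau_0)$: the jump you want to remove sits in the integrator, not in the integrand, and is not "of zero Lebesgue mass in $s$". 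More fundamentally, the integrability criterion \eqref{eq.int-condition} only guarantees that each $Y(t)$ is a well-defined $U$-valued random variable; it does not give pathwise (or even in-probability) convergence of $Y(t)$ as $t\uparrow\tau_0$, let alone at a random time, and establishing such strong left limits is essentially as hard as the theorem itself. Indeed the paper's proof shows $\sup_{t\in[0,T]}\sum_k\scapro{X(t)}{e_k}^2=\infty$ a.s., which should make you suspicious of any attempt to prove strong left limits.

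The missing ingredient is precisely the soft functional-analytic step the paper imports as Theorem 2.3 of \cite{PeszatZab12}: a modification whose scalar projections are all c\`adl\`ag has, by the uniform boundedness principle applied pathwise, $\sup_{t\in[0,T]}\norm{\widetilde X(t)}<\infty$ almost surely. With that in hand your argument closes easily, since then $\sum_{k=1}^n f_k(\tau_0-)^2=\lim_{t\uparrow\tau_0}\sum_{k=1}^n f_k(t)^2\le\sup_t\norm{\widetilde X(t)}^2$ for every $n$, so both endpoint sums are finite and your divergence of $\sum_k(\Delta f_k(\tau_0))^2$ gives the contradiction. So: insert that boundedness lemma in place of the left-limit/stochastic-integral argument, and your proof becomes a correct variant of the paper's, differing only in how the "infinitely large jump" is produced (subordination versus the spectral form of the cylindrical L\'evy measure).
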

\begin{proof} (The proof is based on ideas from \cite{LiuZhai}). 
For every $n\in \N$ and $t\in [0,T]$  define $L_n(t):=\big(L(t)e_1,\dots, L(t)e_n\big)$ and $X_n(t):=\big(\scapro{X(t)}{e_1},\dots, \scapro{X(t)}{e_n}\big)$. 
As
\begin{align*}
 \scapro{X(t)}{e_k}= \scapro{x_0}{e_k}e^{-\lambda_k t} + \int_0^t e^{-\lambda_k (t-s)}\, d(L(s)e_k),
\end{align*}
it follows that $X_n$ is the solution of the stochastic differential equation 
\begin{align*}
dX(t)= -DX(t)\,dt + dL_n(t)\qquad\text{for }t\ge 0, 
\end{align*}
where $D\in \R^{n\times n}$ is a diagonal matrix with entries $\lambda_1,\dots, \lambda_n$. We conclude that the $n$-dimensional processes $(X_n(t):\,t\in [0,T])$ and $(L_n(t):\,t\in [0,T])$  jump at the same time by the same magnitude, which implies
\begin{align*}
  \sup_{t\in [0,T]}\abs{\Delta L_n(t)}^2
  =  \sup_{t\in [0,T]}\abs{\Delta X_n(t)}^2
  \le 4 \sup_{t\in [0,T]}\abs{X_n(t)}^2,
\end{align*}
where $\Delta f(t):=f(t)-f(t-)$ for c{\`a}dl{\`a}g functions $f\colon [0,T]\to \R^n$.
It follows that
\begin{align}\label{eq.sum-and-Levy-n}
 P\left(\sup_{t\in [0,T]} \sum_{k=1}^\infty \scapro{X(t)}{e_k}^2<\infty\right)
& =   P\left(\sup_{n\in\N}\sup_{t\in [0,T]} \sum_{k=1}^n \scapro{X(t)}{e_k}^2<\infty\right)\notag\\
&=\lim_{c\to\infty}   P\left(\sup_{n\in\N}\sup_{t\in [0,T]} \sum_{k=1}^n \scapro{X(t)}{e_k}^2\le\frac{1}{4}c^2\right)\notag\\
&=\lim_{c\to\infty} \lim_{n\to\infty}  P\left(\sup_{t\in [0,T]} \sum_{k=1}^n \scapro{X(t)}{e_k}^2\le\frac{1}{4}c^2\right)\notag\\
&=\lim_{c\to\infty} \lim_{n\to\infty}  P\left(\sup_{t\in [0,T]} \abs{X_n(t)}^2\le\frac{1}{4}c^2\right)\notag\\
&\le\lim_{c\to\infty} \lim_{n\to\infty}  P\left(\sup_{t\in [0,T]} \abs{\Delta L_n(t)}^2\le c^2\right)\notag\\
&=\lim_{c\to\infty} \lim_{n\to\infty} \exp\left(-T \nu_n\Big(\{\beta\in\R^n:\abs{\beta}>c\}\Big)\right),
\end{align}
where $\nu_n$ denotes the L{\'e}vy measure of the $\R^n$-valued L{\'e}vy process $L_n$. Since
$\nu_n=\nu\circ\pi_{e_1,\dots, e_n}^{-1}$ due to \cite[Th.2.4]{DaveMarkus}, we obtain for every $n\in\N$ by Lemma \ref{le.projected-spectral} that   
\begin{align*}
&\big(\nu\circ \pi_{e_1,\dots, e_n}^{-1}\big)\big(\{\beta\in\R^n:\, \abs{\beta}\ge c\}\big)\\
&\qquad =\frac{\alpha}{c_\alpha}\int_{S(\R^n)} \int_0^\infty \1_{\{\beta\in\R^n:\abs{\beta}\ge c\}}(r\xi)\frac{1}{r^{1+\alpha}}\,dr\, \lambda_n(d\xi)\\
&\qquad =\frac{\alpha}{c_\alpha} \int_{S(\R^n)} \int_c^\infty \frac{1}{r^{1+\alpha}}\,dr\,\lambda_n(d\xi)\\
&\qquad= \frac{1}{c_\alpha c^{\alpha}} \lambda_n(S(\R^n))\\
&\qquad =\frac{1}{c_\alpha c^{\alpha}} \frac{\Gamma(\tfrac{1}{2})\Gamma(\tfrac{n+\alpha}{2})}{\Gamma(\tfrac{n}{2})\Gamma(\tfrac{1+\alpha}{2})}.
\end{align*}
Since $\frac{\Gamma(m+\beta)}{\Gamma(m)m^\beta}\to 1$ as $m\to \infty $ 
for all $\beta\in\R$, we obtain from \eqref{eq.sum-and-Levy-n} that 
\begin{align*}
    P\left(\sup_{t\in [0,T]} \sum_{k=1}^\infty \scapro{X(t)}{e_k}^2<\infty\right)
    =0.
\end{align*}
An application of Theorem 2.3 in \cite{PeszatZab12} completes the proof.
\end{proof}

\setcounter{section}{0}
\renewcommand{\thesection}{\Alph{section}} 
\section{Appendix}

For the  present work it is essential to know which constants in the various
presentations of the characteristic function of a multi-dimensional $\alpha$-stable distribution depend on the dimension. Thus,  we present the following well-known theorem with the constants given explicitly: 
%
%
\begin{theorem}\label{th.stable-R}
Let $\mu$  be an infinitely divisible probability measure on $\Borel(\R^n)$
with characteristics $(0,0,\nu)$ and define
$\lambda_n(B)=c_\alpha \nu\big((1,\infty)B\big)$ 
for $B\in\Borel(S(\R^n))$ and $\alpha\in (0,2)$ where
\begin{align*}
c_\alpha:=\begin{cases}
 -\alpha\cos(\tfrac{1}{2}\alpha \pi)\Gamma(-\alpha),& \text{if }\alpha\neq 1,\\
 \frac{\pi}{2}\alpha, &\text{if }\alpha=1.
\end{cases}
\end{align*}
Then the following are equivalent:
\begin{enumerate}
\item[\rm{(a)}] $\mu$ is symmetric, rotationally invariant and $\alpha$-stable;
\item[\rm{(b)}] the L{\'e}vy measure $\nu$ is of the form
\begin{align*}
 \nu(B)
 = \tfrac{\alpha}{c_\alpha} \int_{S(\R^n)}\lambda_n(d\xi)\int_0^\infty \1_{B}(r\xi)\frac{1}{r^{1+\alpha}}\, dr \qquad\text{for all $B\in\Borel(\R^n)$,}
\end{align*}
and $\lambda_n$ is uniformly distributed on the sphere  $S(\R^n)$.
\item[\rm{(c)}] the characteristic function $\phi_{\mu}\colon \R^n\to{\mathbb C}$ of $\mu$ is of the form
\begin{align*}
 \phi_\mu (\beta)=
 \exp\left(-\int_{S(\R^n)} \abs{\scapro{\beta}{\xi}}^\alpha \, \lambda_n(d\xi)\right),
\end{align*}
and $\lambda_n$ is uniformly distributed on the sphere  $S(\R^n)$.
\item[\rm{(d)}] the characteristic function $\phi_{\mu}\colon \R^n\to{\mathbb C}$ of $\mu$ is of the form
\begin{align*}
\phi_{\mu}(\beta)= \exp\Big(- d_\alpha \abs{\beta}^\alpha\Big),
\end{align*}
where $d_\alpha:= \int_{S(\R^n)} \abs{\scapro{\xi_0}{\xi}}^\alpha \, \lambda_n(d\xi)$ for an arbitrary fixed vector $\xi_0\in S(\R^n)$.
\end{enumerate}
\end{theorem}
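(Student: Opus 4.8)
The plan is to prove the four conditions equivalent along the cycle (a) $\Rightarrow$ (b) $\Rightarrow$ (c) $\Rightarrow$ (d) $\Rightarrow$ (a), with almost all the work concentrated in the first two implications. Throughout I would use that, since $\mu$ is symmetric, its Lévy--Khintchine exponent collapses to $\Psi(\beta)=\int_{\R^n}\big(\cos\scapro{u}{\beta}-1\big)\,\nu(du)$, the drift and compensator terms cancelling, so that $\phi_\mu=\exp(\Psi)$.

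For (a) $\Rightarrow$ (b) I would first exploit $\alpha$-stability. For the symmetric law $\mu$ this means $\phi_\mu(\beta)^t=\phi_\mu(t^{1/\alpha}\beta)$ for all $t>0$, equivalently $t\,\Psi(\beta)=\Psi(t^{1/\alpha}\beta)$; by uniqueness of the Lévy measure in the Lévy--Khintchine representation this forces the scaling $\nu(sB)=s^{-\alpha}\nu(B)$ for every $s>0$ and $B\in\Borel(\R^n)$. Disintegrating $\nu$ in polar coordinates $(r,\xi)\in(0,\infty)\times S(\R^n)$, this homogeneity pins the radial part to a constant multiple of $r^{-1-\alpha}\,dr$ and leaves an angular measure $\sigma$ on $S(\R^n)$, yielding the product form of (b) with $\sigma$ in place of $\lambda_n$. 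Rotational invariance of $\mu$ transfers to $\nu$ and hence to $\sigma$, so $\sigma$ is rotation-invariant, i.e.\ uniformly distributed on the sphere. Finally, evaluating this form on the sets $(1,\infty)B$ gives $\nu\big((1,\infty)B\big)=\tfrac{1}{c_\alpha}\sigma(B)$, which matches the defining relation $\lambda_n(B)=c_\alpha\nu\big((1,\infty)B\big)$ and identifies $\sigma=\lambda_n$.

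For (b) $\Rightarrow$ (c) I would substitute the polar form into $\Psi$ and reduce to the scalar integral $\int_0^\infty\big(\cos(rt)-1\big)\,r^{-1-\alpha}\,dr$; the essential computation is the identity $\int_0^\infty\big(\cos(rt)-1\big)\,r^{-1-\alpha}\,dr=-\tfrac{c_\alpha}{\alpha}\abs{t}^\alpha$, whose constant is exactly the definition of $c_\alpha$ (the case $\alpha=1$ handled separately, using $c_1=\tfrac{\pi}{2}\alpha$). This cancels the prefactor $\tfrac{\alpha}{c_\alpha}$ and delivers $\Psi(\beta)=-\int_{S(\R^n)}\abs{\scapro{\beta}{\xi}}^\alpha\,\lambda_n(d\xi)$, with $\lambda_n$ still uniform. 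The step (c) $\Rightarrow$ (d) is a short symmetry argument: writing $\beta=\abs{\beta}\hat\beta$, uniformity of $\lambda_n$ makes $\int_{S(\R^n)}\abs{\scapro{\hat\beta}{\xi}}^\alpha\,\lambda_n(d\xi)$ independent of the unit vector $\hat\beta$ (apply a rotation carrying $\hat\beta$ to $\xi_0$), so factoring out $\abs{\beta}^\alpha$ gives $\phi_\mu(\beta)=\exp(-d_\alpha\abs{\beta}^\alpha)$. For (d) $\Rightarrow$ (a) I would read the three properties directly off the closed form: $\phi_\mu$ is real and even, giving symmetry; it depends only on $\abs{\beta}$, giving rotational invariance; and $\phi_\mu(\beta)^t=\phi_\mu(t^{1/\alpha}\beta)$, giving $\alpha$-stability, closing the cycle.

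The main obstacle is (a) $\Rightarrow$ (b): converting the abstract stability axiom into the quantitative homogeneity $\nu(sB)=s^{-\alpha}\nu(B)$ and justifying the polar disintegration with a genuinely rotation-invariant angular factor. The other delicate point is the scalar integral identity in (b) $\Rightarrow$ (c), in particular reconciling the normalising constant $c_\alpha$ across the regimes $\alpha\neq1$ and $\alpha=1$.
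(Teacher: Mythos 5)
Your proposal is correct, and the cycle (a)$\Rightarrow$(b)$\Rightarrow$(c)$\Rightarrow$(d)$\Rightarrow$(a) closes without gaps. The comparison with the paper is slightly unusual here: the paper does not actually write out a proof but delegates everything to Sato's Theorems 14.2, 14.10, 14.13, 14.14 and Lemma 14.11, so what you have produced is in effect a self-contained expansion of exactly the argument those citations encode. Your step (a)$\Rightarrow$(b) is Sato's polar representation of stable L\'evy measures (homogeneity $\nu(sB)=s^{-\alpha}\nu(B)$ forced by $t\,\Psi(\beta)=\Psi(t^{1/\alpha}\beta)$, then disintegration with radial density $r^{-1-\alpha}$), sharpened by rotational invariance to make the spherical factor uniform; your scalar identity
\begin{align*}
\int_0^\infty\big(\cos(rt)-1\big)\,r^{-1-\alpha}\,dr=-\tfrac{c_\alpha}{\alpha}\abs{t}^\alpha
\end{align*}
is precisely Sato's Lemma 14.11 and is consistent with the stated $c_\alpha$ in both regimes (for $\alpha=1$ the integral equals $-\tfrac{\pi}{2}\abs{t}$, matching $c_1=\tfrac{\pi}{2}$). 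Two small points worth tightening if you write this up: first, the relation $\phi_\mu(\beta)^t=\phi_\mu(t^{1/\alpha}\beta)$ without a shift term requires the symmetry hypothesis (general $\alpha$-stability only gives this up to a translation $e^{i\scapro{c_t}{\beta}}$), which you do invoke but should make explicit as the reason the drift vanishes; second, in (b)$\Rightarrow$(c) the collapse of the L\'evy--Khintchine exponent to $\int(\cos\scapro{u}{\beta}-1)\,\nu(du)$ should be justified from the symmetry of $\nu$ given by (b) (together with the assumed characteristics $(0,0,\nu)$), rather than from an a priori symmetry of $\mu$, since (b) is the hypothesis at that stage. The benefit of your explicit route over the paper's citation is that it makes visible exactly which constants depend on the dimension $n$ (only through $\lambda_n(S(\R^n))$) and which depend only on $\alpha$, which is the very reason the paper includes this theorem in the appendix.
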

\begin{proof}
The proof follows from the Theorems 14.2, 14.10, 14.13, 14.14 and their proofs in \cite{Sato}.

%
%
%
\end{proof}

\begin{lemma}\label{le.uniform-distribution}
Let $(Y_1,\dots, Y_n)$  be uniformly distributed on the sphere $S(\R^{n})$
for some $n\in\N$. Then we have for $p\in (0,2)$ and each $k\in\{1,\dots, n\}$ that
\begin{align*}
E[Y_k^2]=\frac{1}{n},\qquad E[\abs{Y_k}^p]= \frac{\Gamma(\tfrac{n}{2})\Gamma(\tfrac{1+p}{2})}{\Gamma(\tfrac{1}{2})\Gamma(\tfrac{n+p}{2})}
\end{align*}
\end{lemma}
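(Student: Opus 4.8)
The plan is to prove the two identities separately: the first by an elementary symmetry argument, and the second through the Gaussian representation of the uniform measure on the sphere. For the identity $E[Y_k^2]=\tfrac{1}{n}$, I would observe that the uniform distribution on $S(\R^n)$ is invariant under coordinate permutations, which are orthogonal maps, so that $E[Y_k^2]$ does not depend on $k$. Since $Y_1^2+\dots+Y_n^2=1$ holds almost surely, summing over $k$ gives $n\,E[Y_1^2]=1$, and the claim follows.

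For the moment $E[\abs{Y_k}^p]$, I would use the classical fact that if $G=(G_1,\dots,G_n)$ is a vector of independent standard normal random variables and $R:=\abs{G}=(G_1^2+\dots+G_n^2)^{1/2}$, then $G/R$ is uniformly distributed on $S(\R^n)$ and is independent of $R$. Hence $(Y_1,\dots,Y_n)$ has the same distribution as $G/R$, and in particular $\abs{Y_k}$ has the same law as $\abs{G_k}/R$. Writing the pointwise identity $\abs{G_k}^p=R^p\,(\abs{G_k}/R)^p$ and using the independence of $R$ and $G/R$, I obtain the factorisation
\begin{align*}
E[\abs{G_k}^p]=E[R^p]\,E[\abs{Y_k}^p],
\qquad\text{so that}\qquad
E[\abs{Y_k}^p]=\frac{E[\abs{G_k}^p]}{E[R^p]}.
\end{align*}
Both expectations reduce to Gamma integrals. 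For the numerator, the substitution $t=\tfrac12 x^2$ in the one-dimensional Gaussian integral gives $E[\abs{G_k}^p]=\tfrac{2^{p/2}}{\Gamma(1/2)}\Gamma(\tfrac{p+1}{2})$. For the denominator, $R^2$ is chi-squared with $n$ degrees of freedom, so $\tfrac12 R^2$ has a $\Gamma(\tfrac{n}{2})$-distribution and hence $E[R^p]=2^{p/2}\,\tfrac{\Gamma((n+p)/2)}{\Gamma(n/2)}$. Substituting these two values and cancelling the common factor $2^{p/2}$ yields exactly the asserted expression.

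The computation presents no serious obstacle: the only structural input is the independence of the radial and angular parts of a Gaussian vector together with the factorisation it produces, and the two Gamma integrals are entirely routine. I would remark that the argument is valid for every $p>0$, the restriction $p\in(0,2)$ being dictated only by the intended application with $p=\alpha$; moreover, taking $p=2$ reproduces the first identity, since $\Gamma(\tfrac32)=\tfrac12\Gamma(\tfrac12)$ and $\Gamma(\tfrac{n+2}{2})=\tfrac{n}{2}\Gamma(\tfrac n2)$, which serves as a consistency check. An alternative route would insert the explicit marginal density $c_n(1-y^2)^{(n-3)/2}$ of a single coordinate and evaluate the resulting Beta integral, but the Gaussian representation has the advantage of avoiding the derivation of that density altogether.
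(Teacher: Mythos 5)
Your argument is correct, but it takes a genuinely different route from the paper. The paper's proof of the second identity quotes from the literature (Fang--Kotz--Ng) the fact that $Y_k^2$ follows a Beta distribution with parameters $a=\tfrac{1}{2}$ and $b=\tfrac{n-1}{2}$, and then evaluates $E[(Y_k^2)^{p/2}]$ as a single Beta integral, obtaining $\frac{B(a+p/2,\,b)}{B(a,b)}$ directly; this is essentially the ``alternative route'' via the marginal density that you mention at the end and set aside. Your proof instead derives everything from the polar decomposition of a standard Gaussian vector: the independence of $R=\abs{G}$ and $G/R$ gives the factorisation $E[\abs{G_k}^p]=E[R^p]\,E[\abs{Y_k}^p]$, and the two Gamma integrals you compute are correct, as is the final cancellation. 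What your approach buys is self-containedness (no appeal to an external reference for the Beta law of $Y_k^2$) and, as you note, validity for all $p>0$ together with the consistency check at $p=2$; what the paper's approach buys is brevity, since once the Beta distribution is granted the whole computation is one line of Beta-function algebra. The first identity is handled by the same symmetry argument in both proofs.
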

\begin{proof}
Since $Y_1^2+\cdots + Y_n^2=1$  we have $E[Y_k^2]=\tfrac{1}{n}$ for all $k\in \{1,\dots, n\}$ due to symmetry. As $Y_{k}^2$ is distributed according to the Beta distribution with parameters 
$a:=\tfrac{1}{2}$ and $b:=\tfrac{n-1}{2}$, see \cite{FangKotzNG}, we obtain
\begin{align*}
E[\abs{Y_{k}}^p]
=E[\abs{Y_{k}^2}^{p/2}]
&=\frac{1}{B(a,b)}\int_0^1 
 x^{a+p/2-1}(1-x)^{b-1}\,dx\\
& = \frac{B(a+p/2,b)}{B(a,b)}
= \frac{\Gamma(\tfrac{n}{2})\Gamma(\tfrac{1+p}{2})}{\Gamma(\tfrac{1}{2})\Gamma(\tfrac{n+p}{2})},
\end{align*}
which completes the proof.
\end{proof}

\noindent\textbf{Acknowledgments:} the author would like to thank Ilya Pavlyukevich to 
initiate the study of the topic of this work
and Tomasz Kosmala and Umesh Kumar for proofreading.


\begin{thebibliography}{10}

\bibitem{Albeverio-etal}
S.~{Albeverio}, B.~{R\"udiger}, and J.-L. {Wu}.
\newblock {Analytic and probabilistic aspects of L\'evy processes and fields in
  quantum theory.}
\newblock In {\em {L\'evy processes. Theory and applications}}. Boston:
  Birkh\"auser, 2001.

\bibitem{DaveMarkus}
D.~Applebaum and M.~Riedle.
\newblock {Cylindrical L\'evy processes in Banach spaces.}
\newblock {\em Proc. of Lond. Math. Soc.}, 101(3):697--726, 2010.

\bibitem{ApplebaumWu}
D.~{Applebaum} and J.-L. {Wu}.
\newblock {Stochastic partial differential equations driven by L\'evy
  space-time white noise.}
\newblock {\em {Random Oper. Stoch. Equ.}}, 8(3):245--259, 2000.

\bibitem{Balan}
R.~M. {Balan}.
\newblock {SPDEs with $\alpha$-stable L\'evy noise: a random field approach.}
\newblock {\em {Int. J. Stoch. Anal.}}, 2014:22, 2014.

\bibitem{BrzZab10}
Z.~Brze\'zniak and J.~Zabczyk.
\newblock {Regularity of Ornstein-Uhlenbeck processes driven by a L\'evy white
  noise.}
\newblock {\em Potential Anal.}, 32(2):153--188, 2010.

\bibitem{FangKotzNG}
K.-T. {Fang}, S.~{Kotz}, and K.-W. {Ng}.
\newblock {\em {Symmetric multivariate and related distributions.}}
\newblock London: Chapman and Hall, 1990.

\bibitem{KallianpurXiong}
G.~{Kallianpur} and J.~{Xiong}.
\newblock {\em {Stochastic differential equations in infinite dimensional
  spaces.}}
\newblock {Hayward, CA: IMS, Inst. of Math. Statistics}, 1996.

\bibitem{Umesh-diss}
U.~Kumar and M.~Riedle.
\newblock {\em The stochastic Cauchy problem driven by cylindrical L\'evy
  processes}.
\newblock  	arXiv:1803.04365, 2018.

\bibitem{Linde}
W.~Linde.
\newblock {\em {Infinitely divisible and stable measures on Banach spaces.}}
\newblock {Leipzig: BSB B. G. Teubner Verlagsgesellschaft}, 1983.

\bibitem{LiuZhai}
Y.~Liu and J.~Zhai.
\newblock {A note on time regularity of generalized Ornstein-Uhlenbeck
  processes with cylindrical stable noise.}
\newblock {\em C. R., Math., Acad. Sci. Paris}, 350(1-2):97--100, 2012.

\bibitem{Mytnik}
L.~Mytnik.
\newblock{Stochastic partial differential equation driven by stable noise.}
\newblock{\em Probab. Theory Relat. Fields}, 123(2):157--201, 2002.


\bibitem{PeszatZab12}
S.~Peszat and J.~Zabczyk.
\newblock {Time regularity of solutions to linear equations with L\'evy noise
  in infinite dimensions}.
\newblock {\em {Stochastic Processes Appl.}}, 123(3):719--751, 2013.

\bibitem{Riedle11}
M.~Riedle.
\newblock {Infinitely divisible cylindrical measures on Banach spaces.}
\newblock {\em Stud. Math.}, 207(3):235--256, 2011.

\bibitem{Riedle15}
M.~{Riedle}.
\newblock {Ornstein-Uhlenbeck processes driven by cylindrical L\'evy
  processes.}
\newblock {\em {Potential Anal.}}, 42(4):809--838, 2015.

\bibitem{Sato}
K.-I. Sato.
\newblock {\em {L\'evy processes and infinitely divisible distributions.}}
\newblock {Cambridge: Cambridge University Press}, 1999.

\bibitem{Vaketal}
N.~N. Vakhaniya, V.~I. Tarieladze, and S.~A. Chobanyan.
\newblock {\em {Probability distributions on Banach spaces.}}
\newblock {D. Reidel Publishing Company}, 1987.

\bibitem{NeervenWeis}
J.~M.~A.~M. van Neerven, L. Weis.
\newblock{Stochastic integration of functions with values in a Banach space.}
\newblock{\em{ Stud. Math.}}, 166(2):131--170, 2005.


\end{thebibliography}
\end{document}